\newtheorem{theorem}{Theorem}[section]
\newtheorem{lemma}[theorem]{Lemma}
\numberwithin{equation}{section}
\renewcommand{\ell}{l}
\renewcommand{\epsilon}{\varepsilon}
\def\R{\right)}
\def\<{\left<}
\def\>{\right>}
\def\mv1{M_v^1}
\def\mn{(m,n)}
\def\mn'{(m',n')}
\def\R{\mathbb{R}}
\begin{document}

\title[Spatial analyticity for the Euler equations]{Some remarks on the radius of spatial analyticity for the Euler equations}
\begin{abstract} 
We consider the Euler equations on $\mathbb{T}^d$ with analytic data and prove lower bounds for the radius  of spatial analyticity $\epsilon(t)$ of the solution
using a new method based on inductive estimates in standard Sobolev spaces. 
Our results are consistent with similar previous results proved by Kukavica and Vicol, but give a more precise dependence of $\epsilon(t)$ on the radius of analyticity
of the initial datum.
\end{abstract}
\author{Marco Cappiello \and Fabio Nicola}
\address{Dipartimento di Matematica,  Universit\`{a} degli Studi di Torino,
Via Carlo Alberto 10, 10123
Torino, Italy 
} \email{marco.cappiello@unito.it}
\address{Dipartimento di Scienze Matematiche, Politecnico di
Torino, Corso Duca degli
Abruzzi 24, 10129 Torino,
Italy}
\email{fabio.nicola@polito.it}
\subjclass[2000]{Primary 76B03, Secondary 35L60}
\date{}
\keywords{Euler equations,
radius of analyticity}
\maketitle

\section{Introduction}
Consider, on the $d$-dimensional torus $\mathbb{T}^d$, the Euler equations
\begin{equation}\label{euler}
\frac{\partial u}{\partial t}+P(u\cdot\nabla_x u)=0,\quad {\rm div}\, u=0,
\end{equation}
where $P$ is the Leray projection in $L^2$ on the subspace of divergence free vector fields.
It is well known that the corresponding Cauchy problem is locally well posed in $H^k$ if $k>d/2+1$, see e.g. \cite[Chapter 17, Section 2]{taylor}. The analyticity of the solution in the space variables, for analytic initial data is also an important issue, investigated in \cite{AM, B, BB, BBZ,KV, KV2, LO}, and one is specially interested in lower bounds for the radius of analiticity $\epsilon(t)$ as $t$ grows.\par
To be precise, if $f$ is an analytic function on the torus, its radius of analyticity is the supremum of the constants $\epsilon>0$ such that $\|\partial^\alpha f\|_{L^\infty}\leq C\epsilon^{-|\alpha|} |\alpha|!$ for some constant $C>0$. 
 Notice that we can also replace the $L^\infty$ norm with a Sobolev norm $H^k$, $k\geq0$.\par
Concerning the Euler equations on the torus, a recent result by Kukavica and Vicol \cite{KV} states that for the radius of analyticity $\epsilon(t)$ of any analytic solution $u(t)$ we have the lower bound
\begin{equation} \label{radius1}
\epsilon(t)\geq C (1+t)^{-2}\exp\Big( -C_0\int_0^t \|\nabla u(s)\|_{L^\infty}\, ds\Big)
\end{equation}
for a constant $C_0>0$ depending on the dimension and $C>0$ depending on the norm of the initial datum in some \textit{infinite order} Sobolev space. \par
The same authors in \cite{KV2} obtained a better lower bound for $\epsilon(t)$ for the Euler equations in a half space replacing $(1+t)^{-2}$ by $(1+t)^{-1}$ in \eqref{radius1}. Now, one suspects that the same bound should hold also on $\mathbb{T}^d$ and on $\R^d$. In this paper we give a new and more elementary proof of the results above on $\mathbb{T}^d$, which yields some improvements concerning the dependence on the initial data. In fact, it is natural to expect that the constant $C$ in \eqref{radius1} should be comparable with the inverse of the radius of analyticity of the initial datum. The dependence found in \cite{KV, KV2} is due to the fact that the proof given there relies on the energy method in infinite order Gevrey-Sobolev spaces (cf. also \cite{ bo1, bo2, FT, GK, LO}). In our recent paper \cite{CDN} we developed a method for the estimate of the radius of analyticity for semilinear symmetrizable hyperbolic systems based on inductive estimates in standard Sobolev spaces. The purpose of this note is to adapt this method to the Euler equations on $\mathbb{T}^d$ and to prove that
\begin{equation} \label{radius2}
\epsilon(t)\geq C (1+t)^{-1}\exp\Big( -C_0 \int_0^t \|\nabla u(s)\|_{L^\infty}\, ds\Big)
\end{equation}
(as suggested in \cite{KV2}) with a neat dependence of the constant $C$ on the initial datum. \par
Namely, we have the following result. 
\begin{theorem}\label{mainteo}
Let $k>d/2+1$ be fixed. There exists constants $C_0,C_1>0$, depending only on $k$ and $ d$, such that the following is true.\par Let $u_0$ be analytic in $\mathbb{T}^d$, ${\rm div}\, u_0=0$, satisfying\footnote{We use the sequence $|\alpha|!/(|\alpha|+1)^2$ in place of $|\alpha|!$ in \eqref{ipotesi0} just for technical reasons; this does not change the radius of analyticity.} 
\begin{equation}\label{ipotesi0}
\|\partial^\alpha u_0\|_{H^k}\leq B A^{|\alpha|-1}|\alpha|!/(|\alpha|+1)^2,\quad\alpha\in\mathbb{N}^d,
\end{equation}
 for some $B\geq \frac{9}{4}\|u_0\|_{H^{2k+1}}$, $A\geq 1$.\par Let $u(t,x)$ by the corresponding $H^k$ maximal solution of the Euler equations \eqref{euler}, with $u(0,\cdot)=u_0$. 
 Then $u(t,\cdot)$ is analytic with radius of analyticity
\begin{equation}
\epsilon(t)\geq A^{-1} (1+C_1Bt)^{-1}\exp\Big(-C_0\int_0^t \|\nabla u(s)\|_{L^\infty}\, ds\Big).
\end{equation}
\end{theorem}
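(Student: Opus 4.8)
The plan is to reduce the radius estimate to a family of inductive bounds on the Sobolev norms of all derivatives of $u$. Concretely, I would try to show, by induction on $|\alpha|$ propagated in time by a continuity argument, that $\|\partial^\alpha u(t)\|_{H^k}\le \Phi(t)\,\Lambda(t)^{|\alpha|-1}\,\frac{|\alpha|!}{(|\alpha|+1)^2}$ for suitable positive functions $\Phi,\Lambda$ with $\Phi(0)=B$, $\Lambda(0)=A$, chosen so that $\Lambda(t)=A(1+C_1Bt)\exp\big(C_0\int_0^t\|\nabla u\|_{L^\infty}\big)$. At $t=0$ this is exactly hypothesis \eqref{ipotesi0}, and for each fixed $t$ the bound forces the radius of analyticity to be $\ge 1/\Lambda(t)$, which is the assertion. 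The weight $\frac{|\alpha|!}{(|\alpha|+1)^2}$ of the footnote is not cosmetic: setting $c_n:=n!/(n+1)^2$, the point is that $c_n$ is essentially submultiplicative under the discrete convolutions produced by the Leibniz rule, which is what ultimately makes the inductive sums summable with a constant independent of $|\alpha|$.

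The engine is an energy estimate for $v_\alpha:=\partial^\alpha u$, which solves $\partial_t v_\alpha+P\,\partial^\alpha(u\cdot\nabla u)=0$. Expanding $\partial^\alpha(u\cdot\nabla u)=\sum_{\beta\le\alpha}\binom{\alpha}{\beta}(\partial^\beta u)\cdot\nabla v_{\alpha-\beta}$ and pairing with $v_\alpha$ (using $Pv_\alpha=v_\alpha$ and $\mathrm{div}\,u=0$), the transport term $\beta=0$ has its principal part annihilated by the divergence-free condition, the term $\beta=\alpha$ contributes the clean $\langle v_\alpha\cdot\nabla u,v_\alpha\rangle\le\|\nabla u\|_{L^\infty}\|v_\alpha\|^2$, and the rest is a genuine remainder. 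I would thus aim at a differential inequality of the form $\frac{d}{dt}\|v_\alpha\|\le C_0\|\nabla u\|_{L^\infty}\|v_\alpha\|+C\sum_{0<\beta<\alpha}\binom{\alpha}{\beta}\|v_\beta\|\,\|\nabla v_{\alpha-\beta}\|$; the crucial structural features are that the integrating factor $\exp(-C_0\int\|\nabla u\|_{L^\infty})$ accounts for the exponential in the statement, and that—since $\beta\neq 0$—one has $\nabla v_{\alpha-\beta}=\sum_j v_{\alpha-\beta+e_j}$ with $|\alpha-\beta+e_j|\le|\alpha|$, so the extra derivative carried by $u\cdot\nabla u$ never pushes any factor beyond differentiation order $|\alpha|$.

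Substituting the ansatz into the remainder and grouping by $m=|\beta|$ via $\sum_{|\beta|=m,\ \beta\le\alpha}\binom{\alpha}{\beta}=\binom{|\alpha|}{m}$, the sum collapses to $\sum_{m}\binom{|\alpha|}{m}c_m\,c_{|\alpha|-m+1}$, and the key combinatorial lemma I would isolate is $\sum_{m=1}^{n}\binom{n}{m}c_m\,c_{n-m+1}\le K\,n\,c_n$ for an absolute constant $K=K(d)$: a single linear-in-$|\alpha|$ loss, coming from the gradient in the nonlinearity. This is precisely matched by the factor $(|\alpha|-1)$ produced when $\frac{d}{dt}$ hits $\Lambda^{|\alpha|-1}$, so the choice $\frac{\dot\Lambda}{\Lambda}=\frac{C_1B}{1+C_1Bt}+C_0\|\nabla u\|_{L^\infty}$ together with the \emph{decaying} prefactor $\Phi(t)=\frac{B}{1+C_1Bt}$ will close the differential inequality for $C_1=C_1(k,d)$ large. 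It is exactly this decaying choice of $\Phi$—rather than a constant one, which would only yield an exponential factor as in \eqref{radius1}—that produces the algebraic gain $(1+C_1Bt)^{-1}$ and the sharp dependence on the initial datum. The finitely many low-order cases (where the decay of $\Phi$ would otherwise spoil the budget) must be treated as base cases controlled directly; this is where the surplus regularity $\|u_0\|_{H^{2k+1}}\lesssim B$ enters, since $\|\partial^\alpha u_0\|_{H^k}\le\|u_0\|_{H^{k+|\alpha|}}\le\|u_0\|_{H^{2k+1}}$ for $|\alpha|\le k+1$.

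The step I expect to be the main obstacle is guaranteeing that the linear coefficient of $\|v_\alpha\|$ in the energy estimate is genuinely $\|\nabla u\|_{L^\infty}$ and nothing larger. A black-box quasilinear energy estimate in $H^k$ would bound the top-order commutator $[\,\partial^\gamma,u\cdot\nabla\,]v_\alpha$ by a Kato–Ponce term $\sim\|u\|_{H^k}\|\nabla v_\alpha\|_{L^\infty}$, hence by $\|u\|_{H^k}\|v_\alpha\|_{H^k}$; such a term grows along the solution and can be absorbed neither into the decaying $\Phi$ nor into the $\|\nabla u\|_{L^\infty}$-budget, so it would destroy the clean exponential. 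Overcoming this is where the divergence-free structure must be used in full: estimating each derivative at the $L^2$ level (legitimate since, by the Remark, the radius is the same whether measured in $L^\infty$ or in any $H^k$, $k\ge 0$) makes the transport term vanish \emph{identically} and reduces the only linear contribution to the harmless $\langle v_\alpha\cdot\nabla u,v_\alpha\rangle$. The delicate bookkeeping is then to split the surviving nonlinear sum so that, for small $\beta$ the factor $\partial^\beta u$ and for large $\beta$ the factor $\nabla v_{\alpha-\beta}$ is placed in $L^\infty$—each via a Sobolev embedding costing only a fixed number of derivatives—keeping every factor within the inductive range $\le|\alpha|$ uniformly in $\alpha$, so that the combinatorial lemma above applies with an $\alpha$-independent constant.
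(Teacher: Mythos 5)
Your plan is correct and is essentially the paper's own proof: your inductive ansatz coincides with \eqref{ipotesi} (indeed $\Phi(t)\Lambda(t)^{|\alpha|-1}=BA^{N-1}(1+C_1Bt)^{N-2}\exp\big(C_0(N-1)\int_0^t\|\nabla u\|_{L^\infty}\,ds\big)$, the paper's bound up to the harmless factor $2$), your $L^2$-level energy pairing that exploits ${\rm div}\,u=0$ to leave only $\|\nabla u\|_{L^\infty}$ as linear coefficient is the identity \eqref{prima}, your combinatorial lemma $\sum_{m}\binom{n}{m}c_m c_{n-m+1}\le Kn\,c_n$ is exactly \eqref{binomiali}, and your matching of the linear-in-$|\alpha|$ loss against the decaying prefactor is the paper's Gronwall step with the choice $C_1=3C$. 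The only differences are organizational: the paper tracks the Sobolev index via a double Leibniz sum in $(\alpha,\gamma)$, $|\gamma|\le k$, and handles both the base case and the low-order ``edge'' factors with the propagated estimate \eqref{sobolev} (valid for all $t$, not just the bound on $u_0$ at $t=0$, which is what your base-case sketch should invoke), rather than starting the induction above order $k$.
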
 

The proof is different and more elementary than the one in \cite{KV, KV2}, and it is in part inspired by the arguments in \cite{AM}. It proceeds by estimating by induction the growth of the spatial derivatives of $u$ in {\it finite order} Sobolev spaces. Moreover, the same argument can be readily repeated replacing $\mathbb{T}^d$ by $\R^d$. Although in the case of the Euler equations it provides only minor improvements to the results in \cite{KV, KV2}, our method seems to be adaptable also to other types of quasilinear evolution equations and conservation laws. We shall treat these applications in a future paper.


\section{Notation and preliminary results}
In the following we use the notation $X\lesssim Y$ if $X\leq C Y$ for some constant $C$ {\it depending only on the dimension $d$ and on the index $k$ in Theorem \ref{mainteo}}.\par
Moreover, as in \cite[page 196]{AM} we consider the sequence 
\begin{equation}\label{mn}
M_n=\frac{n!}{(n+1)^2},\quad n\geq0,
\end{equation}
so that
\begin{equation}\label{binomiali}
\sum_{\beta<\alpha}\binom{\alpha}{\beta}M_{|\alpha-\beta|}M_{|\beta|+1}\leq  C|\alpha|M_{|\alpha|}.
\end{equation}
for some constant $C>0$. \par
We also recall from \cite[Chapter 13, Proposition 3.6]{taylor}, for future reference, the following estimates
\begin{equation}\label{prodotto}
\|\partial^\alpha u \cdot \partial^\beta v\|_{L^2}\leq C(\|u\|_{L^{\infty}}\|v\|_{H^m}+\|u\|_{H^m}\|v\|_{L^{\infty}}) ,\quad |\alpha|+|\beta|=m
\end{equation}
for a constant $C>0$ depending on $m$ and on the dimension $d$. \par
We will use the following form of the Gronwall inequality. 
\begin{lemma}\label{gronwall} Let $f(t)\geq0$, $g(t)\geq0$, $h(t)\geq0$ be continuous functions on $[0,T]$ and $C\geq 0$, such that
\[
f(t)\leq C+\int_0^t h(s)f(s)\, ds+\int_0^t g(s)\, ds, \quad t\in[0,T].
\]
Then, with $H(t):=\int_0^t h(s)\, ds$, we have
\[
f(t)\leq e^{H(t)}\Big[C+\int_0^t e^{-H(s)} g(s)\, ds\Big], \quad t\in[0,T].
\]
\end{lemma}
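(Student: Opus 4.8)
The plan is to reduce this generalized Gronwall inequality to a first-order linear differential inequality and then integrate it using the standard integrating factor. First I would introduce the auxiliary function
\[
\phi(t):=C+\int_0^t h(s)f(s)\, ds+\int_0^t g(s)\, ds,\qquad t\in[0,T],
\]
which by hypothesis satisfies $f(t)\leq\phi(t)$ for all $t\in[0,T]$. Since $h$, $f$, and $g$ are continuous, the integrand $h(s)f(s)+g(s)$ is continuous, so $\phi$ is continuously differentiable with $\phi'(t)=h(t)f(t)+g(t)$. Using the pointwise bound $f(t)\leq\phi(t)$ together with $h(t)\geq0$, this yields the differential inequality
\[
\phi'(t)\leq h(t)\phi(t)+g(t),\qquad t\in[0,T].
\]

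Next I would multiply through by the integrating factor $e^{-H(t)}$, where $H(t)=\int_0^t h(s)\, ds$ so that $H'(t)=h(t)$ and $H(0)=0$. Rearranging the inequality above and recognizing a total derivative gives
\[
\frac{d}{dt}\Big(e^{-H(t)}\phi(t)\Big)=e^{-H(t)}\big(\phi'(t)-h(t)\phi(t)\big)\leq e^{-H(t)}g(t),\qquad t\in[0,T].
\]
I would then integrate this from $0$ to $t$. Since $\phi(0)=C$ and $e^{-H(0)}=1$, the left-hand side telescopes to $e^{-H(t)}\phi(t)-C$, so that
\[
e^{-H(t)}\phi(t)\leq C+\int_0^t e^{-H(s)}g(s)\, ds.
\]
Multiplying by $e^{H(t)}>0$ and using $f(t)\leq\phi(t)$ finally delivers the claimed bound.

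There is no serious obstacle in this argument; the only point requiring a little care is the justification that $\phi$ is differentiable. This is where the continuity hypotheses on $f$, $g$, and $h$ enter: they guarantee $\phi\in C^1([0,T])$ via the fundamental theorem of calculus, which is what legitimizes passing from the integral inequality to the differential inequality. An alternative that avoids differentiability altogether would be to iterate the integral inequality (a Picard-type bootstrap), substituting the bound for $f$ back into the integral $\int_0^t h(s)f(s)\, ds$ repeatedly and summing the resulting series to recover the exponential factor $e^{H(t)}$; but the integrating-factor route is shorter and cleaner, so I would present that one.
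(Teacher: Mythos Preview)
Your argument is correct and self-contained. The paper does not spell out a proof: it simply cites the standard Gronwall lemma from Rauch's textbook (applied with the nondecreasing ``constant'' $a(t)=C+\int_0^t g(s)\,ds$) and remarks that an integration by parts then converts the resulting bound into the stated form $e^{H(t)}\big[C+\int_0^t e^{-H(s)}g(s)\,ds\big]$. Your integrating-factor route is essentially the argument that underlies the cited Gronwall lemma itself, so you have in effect unpacked what the paper leaves as a black box; this makes your version more transparent and also bypasses the separate integration-by-parts step. Both approaches are entirely standard and acceptable here.
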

\begin{proof}
The result can be obtained for example by applying Gronwall lemma in \cite[Lemma 2.1.3]{rauch}, and integrating by parts.
\end{proof}

Finally we recall from \cite[Chapter 17, Section 2 and Exercise 1, page 485]{taylor} that if $u_0$ is a smooth vector field (and ${\rm div}\, u_0=0$), then the maximal $H^k$-solution $u(t)$ of the Euler equations, with $u(0)=u_0$, $k>d/2+1$, is smooth as well and moreover the following estimates hold for its Sobolev norms: if $s\geq k>d/2+1$,
\begin{equation}\label{sobolev}
\|u(t)\|_{H^s}\leq \|u(0)\|_{H^s} \exp\Big(C_0 \int_0^t \|\nabla u(s)\|_{L^\infty}\, ds\Big)
\end{equation}
for some constant $C_0>0$ depending on the dimension and on $s$. Indeed, in the sequel we will use these estimates for some fixed $s$ (depending only on $d$). 

\section{Proof of the main result (Theorem \ref{mainteo})}
As observed in the previous section, we already know that the solution $u$ is smooth, since $u_0$ is. Now, it is sufficient to prove that for $|\alpha|=N\geq 2$ we have 
\begin{equation}\label{ipotesi}
\frac{\|\partial^\alpha u(t)\|_k}{M_{|\alpha|}}\leq 2B A^{N-1} \exp\Big(C_0(N-1)\int_0^t \|\nabla u(s)\|_{L^\infty}\, ds\Big)(1+C_1Bt)^{N-2},
\end{equation}
 where the sequence $M_{|\alpha|}$ is defined by \eqref{mn} and $C_0,C_1$ are positive constants depending only on $k$ and $d$.\par
We also set
\begin{equation}\label{art}
\mathcal{E}_{N}[u(t)]=\sup_{|\alpha|=N}\frac{\|\partial^{\alpha} u(t)\|_k}{M_{|\alpha|}}.
\end{equation}
We proceed by induction on $N$. The result is true for $N=2$ by \eqref{sobolev} with $s=k+2 \leq 2k+1$ and by the assumption $B\geq \frac{9}{4}\|u_0\|_{H^{2k+1}}$, $A\geq 1$. Hence, let $N\geq 3$ and assume \eqref{ipotesi} holds for multi-indices $\alpha$ of length $2 \leq |\alpha| \leq N-1$ and prove it for $|\alpha|=N$.\par
For $|\alpha|=N$, $|\gamma|\leq k$ we estimate $\|\partial^{\alpha+\gamma} u\|_{L^2}$ starting from the following formula, which is well-known (see e.g. \cite[pag. 477]{taylor}):
\begin{equation}\label{prima}
\frac{d}{dt}\|\partial^{\alpha+\gamma} u\|^2_{L^2}=-2([\partial^{\alpha+\gamma},L] u,\partial^{\alpha+\gamma} u)_{L^2},
\end{equation}
with $Lw=L_u w:=u\cdot\nabla w$. Now, we have 
\begin{equation}\label{commutatore}
[\partial^{\alpha+\gamma},L]u=\sum_{\beta\leq\alpha}\binom{\alpha}{\beta}\sum_{\delta\leq\gamma\atop |\beta|+|\delta|<|\alpha|+|\gamma|}\binom{\gamma}{\delta}\partial^{\alpha-\beta+\gamma-\delta} u\cdot\nabla \partial^{\beta+\delta} u.
\end{equation}
We estimate the $L^2$ norm of each term, considering first the sum
\[
\sum_{\beta\in \mathcal{A}_{\alpha} }\binom{\alpha}{\beta}\sum_{\delta\leq\gamma\atop |\beta|+|\delta|<|\alpha|+|\gamma|}\binom{\gamma}{\delta}\| \partial^{\alpha-\beta+\gamma-\delta} u\cdot\nabla \partial^{\beta+\delta} u\|_{L^2},
\]
where
\[
\mathcal{A}_{\alpha}:=\{\beta:\,\beta\leq\alpha, 0\not=|\beta|\leq |\alpha|-2\}.
\]
Using \eqref{prodotto} and the fact that $k >d/2+1$ we see that for $\beta \in \mathcal{A}_{\alpha}$ we have
$$
\| \partial^{\alpha-\beta+\gamma-\delta} u\cdot\nabla \partial^{\beta+\delta} u\|_{L^2}
\lesssim \| \partial^{\alpha-\beta}u\|_{k} \|\nabla \partial^{\beta} u\|_{k}.
$$
By the inductive hypothesis \eqref{ipotesi} (note that $2\leq |\alpha-\beta|\leq |\alpha|-1=N-1$, $2\leq |\beta|+1\leq N-1$) we obtain
\begin{multline*}
\sum_{\beta\in \mathcal{A}_{\gamma,\delta,\alpha} }\binom{\alpha}{\beta}\sum_{\delta\leq\gamma}\binom{\gamma}{\delta}\| \partial^{\alpha-\beta+\gamma-\delta} u\cdot\nabla \partial^{\beta+\delta} u\|_{L^2} \\
\lesssim \sum_{\beta<\alpha}\binom{\alpha}{\beta}M_{|\alpha-\beta|}M_{|\beta|+1} B^2 A^{N-1} \exp\Big(C_0(N-1)\int_0^t \|\nabla u(s)\|_{L^\infty}\, ds\Big)(1+C_1Bt)^{N-3}\\
\lesssim 
NM_{N} B^2 A^{N-1} \exp\Big(C_0(N-1)\int_0^t \|\nabla u(s)\|_{L^\infty}\, ds\Big)(1+C_1Bt)^{N-3},
\end{multline*}
where we used \eqref{binomiali}. \par
It remains to estimate  the $L^2$ norms of the terms in \eqref{commutatore} when $\beta\leq\alpha$, $\delta\leq\gamma$ and $ |\beta|+|\delta|<|\alpha|+|\gamma|$, but the conditions
\[
0\not=|\beta|\leq |\alpha|-2
\]
fail.\par
Consider first the terms where the highest order derivatives fall on a single factor, namely $|\beta|+|\delta|=|\alpha|+|\gamma|-1$ or $|\beta|+|\delta|=0$. We distinguish three cases: for the terms with $\beta=\alpha$ and $|\delta|=|\gamma|-1$ we have  
\[
\binom{\alpha}{\alpha}\|\partial^{\gamma-\delta} u\cdot\nabla \partial^{\alpha+\delta} u\|_{L^2} \lesssim \|\nabla u\|_{L^\infty}\|\partial^\alpha u\|_{H^{|\delta|+1}}\leq \|\nabla u\|_{L^\infty}\|\partial^\alpha u\|_{H^k}
\]
whereas for those with $|\beta|=|\alpha|-1$, $\delta=\gamma$ we use
\begin{align*}
\binom{\alpha}{\beta}\|\partial^{\alpha-\beta}& u\cdot\nabla \partial^{\beta+\gamma} u\|_{L^2} 
\lesssim |\alpha| \|\nabla u\|_{L^\infty}\sum_{j:\alpha_j\geq 1}\|\partial^{\alpha-e_j} u\|_{H^{k+1}}\\
&\lesssim  |\alpha| \|\nabla u\|_{L^\infty}\sum_{j:\alpha_j\geq 1\atop 1\leq k\leq d}\|\partial^{\alpha-e_j+e_k} u\|_{H^{k}}\lesssim N M_{N}\|\nabla u\|_{L^\infty}\mathcal{E}_{N}[u],
\end{align*}
where $\mathcal{E}_{N}[u]$ is defined in \eqref{art}. \par
Finally, for the terms with $|\beta|+|\delta|=0$ we have
\[
\binom{\alpha}{0}\|\partial^{\alpha+\gamma}u\cdot\nabla u\|_{L^2} 
 \lesssim \|\nabla u\|_{L^\infty}\|\partial^\alpha u\|_{H^k}.
\]
We now consider the terms with $0\not=|\beta|+|\delta|\leq|\alpha|+|\gamma|-2$ but $\beta=\alpha$ or $|\beta|=|\alpha|-1$ or $\beta=0$.\par
If $\beta=\alpha$ then $|\delta|\leq |\gamma|-2\leq k-2$ and we can write 
\begin{align*}
\binom{\alpha}{\alpha}&\|\partial^{\gamma-\delta} u\cdot \nabla \partial^{\alpha+\delta} u\|_{L^2}\leq \|\partial^{\gamma-\delta} u\|_{L^\infty}\|\nabla \partial^{\alpha+\delta} u\|_{L^2}\\
&\lesssim \|\partial^{\gamma-\delta} u\|_{H^k}\| \partial^{\alpha} u\|_{H^{k-1}}\lesssim \|\partial^{\gamma-\delta} u\|_{H^k}\sup_{j:\alpha_j\geq 1}\|\partial^{\alpha-e_j} u\|_{H^{k}}\\
&\lesssim M_{N-1}\|u(0)\|_{H^{2k}}\exp\Big(C_0 \int_0^t \|\nabla u(s)\|_{L^\infty}\, ds\Big) \\ 
&\qquad\qquad\qquad \times BA^{N-2}\exp\Big(C_0(N-2)\int_0^t \|\nabla u(s)\|_{L^\infty}\, ds\Big)(1+C_1Bt)^{N-3}\\
&\lesssim M_{N-1}B^2A^{N-2}\exp\Big(C_0(N-1)\int_0^t \|\nabla u(s)\|_{L^\infty}\, ds\Big)(1+C_1Bt)^{N-3},
 \end{align*}
where we used \eqref{sobolev} (with $s=|\gamma-\delta|+k\leq 2k$), the inductive hypothesis \eqref{ipotesi}, and the fact that $B>\|u_0\|_{H^{2k+1}}$.\par
If $|\beta|=|\alpha|-1$ then $|\delta|\leq k-1$ and we have
\begin{align*}
\binom{\alpha}{\beta}&\|\partial^{\gamma-\delta} u\cdot\nabla \partial^{\beta+\delta} u\|_{L^2}\lesssim |\alpha|\cdot \|\partial^{\gamma-\delta}u\|_{L^\infty}\|\nabla\partial^{\beta+\delta} u\|_{L^2}\lesssim |\alpha| \cdot \|\partial^{\gamma-\delta}u\|_{H^k}\|\partial^{\beta} u\|_{H^k}\\
&\lesssim N M_{N-1} \|u(0)\|_{H^{2k}}\exp\Big(C_0 \int_0^t \|\nabla u(s)\|_{L^\infty}\, ds\Big) \\ &\qquad\qquad\qquad \times BA^{N-2}\exp\Big(C_0(N-2)\int_0^t \|\nabla u(s)\|_{L^\infty}\, ds\Big)(1+C_1Bt)^{N-3}\\
&\lesssim NM_{N-1} B^2A^{N-2}\exp\Big(C_0(N-1)\int_0^t \|\nabla u(s)\|_{L^\infty}\, ds\Big)(1+C_1Bt)^{N-3}.
\end{align*}
Finally, if $\beta=0$ then $\delta\not=0$ and we have 
\begin{align*}
\binom{\alpha}{0}\|\partial^{\alpha+\gamma-\delta} u\cdot\nabla \partial^{\delta} u\|_{L^2}&\lesssim\|\partial^{\alpha} u\|_{H^{k-1}}\|\nabla \partial^{\delta} u\|_{H^k}\\
&\lesssim \sup_{j:\alpha_j\geq 1} \|\partial^{\alpha-e_j} u\|_{H^{k}}\|\nabla \partial^{\delta} u\|_{H^k}\\
&\lesssim M_{N-1} B^2A^{N-2}\exp\Big(C_0(N-1)\int_0^t \|\nabla u(s)\|_{L^\infty}\, ds\Big)\\ & \times (1+C_1Bt)^{N-3}.
\end{align*}
Summing up, we have
\begin{multline*}
\|[\partial^{\alpha+\gamma}, L] u\|_{L^2}
\lesssim N M_N\|\nabla u\|_{L^\infty} \mathcal{E}_{N}[u]\\ +NM_{N} B^2A^{N-1} \exp\Big(C_0(N-1)\int_0^t \|\nabla u(s)\|_{L^\infty}\, ds\Big)(1+C_1Bt)^{N-3}.
\end{multline*}
By applying the Cauchy-Schwarz inequality in $L^2$ in \eqref{prima} and summing over $|\gamma|\leq k$ we then obtain\footnote{We use $N/(N-1)\leq 3/2 $ if $N\geq 3$, and also  $\frac{d}{dt}\|\partial^{\alpha} u(t)\|^2_{H^k}=2\|\partial^{\alpha} u(t)\|_{H^k}\frac{d}{dt}\|\partial^{\alpha} u(t)\|_{H^k}$, which in fact holds for $\|\partial^{\alpha} u(t)\|_{H^k}\not=0$ but a standard argument, see e.g. \cite[pag. 47-48]{rauch}, shows that the results below still hold when $\|\partial^{\alpha} u(t)\|_{H^k}=0$.}
\begin{multline*}
\frac{d}{dt}\frac{\|\partial^{\alpha} u(t)\|_{H^k}}{M_{|\alpha|}} \leq C (N-1)\|\nabla u(t)\|_{L^\infty} \mathcal{E}_N[u(t)]\\ +CN A^{N-1}B^2 \exp\Big(C_0(N-1)\int_0^t \|\nabla u(s)\|_{L^\infty}\, ds\Big)(1+C_1Bt)^{N-3}
\end{multline*}
for a constant $C>0$ depending only on the dimension $d$ and $k$.\par 
Now we integrate from $0$ to $t$ and take the supremum on $|\alpha|=N$. We obtain 
\begin{align*}
&\mathcal{E}_N[u(t)]\leq \int_0^t C (N-1)\|\nabla u(s)\|_{L^\infty} \mathcal{E}_N[u(s)]\, ds +\mathcal{E}_N[u(0)]\\
&+\int_0^t CN B^2A^{N-1} \exp\Big(C_0(N-1)\int_0^s \|\nabla u(\tau)\|_{L^\infty}\, d\tau\Big)(1+C_1Bs)^{N-3}\, ds.
\end{align*}

 We can take $C_0\geq C$, so that 
Gronwall inequality (Lemma \ref{gronwall}) gives
\begin{align*}
\mathcal{E}_N[u(t)]&\leq\exp\Big(C_0(N-1)\int_0^t \|\nabla u(s)\|_{L^\infty}\, ds\Big) \\
&\times \Big[\mathcal{E}_N[u(0)]+CNB^2A^{N-1}\int_0^t (1+C_1Bs)^{N-3} \,ds\Big]\\
&\leq \exp\Big(C_0(N-1)\int_0^t \|\nabla u(s)\|_{L^\infty}\, ds\Big) \\
&\times \Big[\mathcal{E}_N[u(0)]+\frac{C}{C_1}\frac{N}{N-2}BA^{N-1}(1+C_1Bt)^{N-2}\Big]
\end{align*}
Now, we have $\mathcal{E}_N[u(0)]\leq  BA^{N-1}$ by the assumption \eqref{ipotesi0}. If we choose $C_1=3C$, so that $3C/C_1=1$, since $A\geq 1$, $N\geq3$  we have
\begin{align*}
\mathcal{E}_N[u(0)]+&\frac{C}{C_1}\frac{N}{N-2}BA^{N-1}(1+C_1Bt)^{N-2} \\
&\leq [BA^{N-1}+\frac{3C}{C_1} BA^{N-1}](1+C_1Bt)^{N-2}\\
&\leq 2BA^{N-1}(1+C_1Bt)^{N-2},
\end{align*}
and we obtain exactly \eqref{ipotesi} for $|\alpha|=N$. The theorem is proved.

\end{document}